\documentclass{amsart}

\usepackage{amssymb}
\usepackage{amsthm}

\theoremstyle{plain}
\newtheorem{theorem}{Theorem}[section]

\newtheorem{lemma}[theorem]{Lemma}
\newtheorem{corollary}[theorem]{Corollary}
\bibliographystyle{plain}

\DeclareMathOperator{\dnm}{dnm}
\newcommand{\Ball}{\underline{B}}

\newcommand{\N}{\mathbb{N}}
\newcommand{\Z}{\mathbb{Z}}                                               % ring of rational integers
\newcommand{\Q}{\mathbb{Q}}                                               % field of rational numbers
\newcommand{\R}{\mathbb{R}}                                               % field of real numbers

\newcommand{\PSI}{\underline{\psi}}

\newcommand{\defiso}[5]{$$\begin{array}{rrcl} #1: & #2 & \longrightarrow & #3 \\ & #4 & \longmapsto & #5 \end{array}$$}
\newcommand{\x}{\mathbf{x}}
\newcommand{\y}{\mathbf{y}}
\newcommand{\z}{\mathbf{z}}

\newcommand{\dbbk}[1]{\{\!\{ #1 \}\!\}}
\newcommand{\aand}{\hbox{\qquad and \qquad}}

\begin{document}

\title{Diagonal approximation in completions of $\Q$}
\author{Matthew Palmer}
\thanks{Research supported by an EPSRC Doctoral Training Grant. \\ \indent 2010 \emph{Mathematics Subject Classification}. 11J61, 11J83}
\address{School of Mathematics, University of Bristol, Bristol, BS8 1TW}
\email{mp12500@bristol.ac.uk}

\begin{abstract}
We prove analogues of some classical results from Diophantine approximation and metric number theory (namely Dirichlet's theorem and the Duffin--Schaeffer theorem) in the setting of diagonal Diophantine approximation, i.e. approximating elements of $\R \times \Q_{p_1} \times \cdots \times \Q_{p_r}$ by elements of the diagonal embedding of $\Q$ into this space. 
\end{abstract}

\maketitle

%==================================================================
%==================================================================
%==================================================================
%==================================================================

\section{Introduction}
In this paper, we prove analogues of some theorems of Diophantine approximation and metric number theory in the new setting of diagonal approximation. As motivation, we begin with a brief overview of the results we intend to prove analogues of, and of some of the analogues which have already been proven.

The first theorem of Diophantine approximation, dating to 1842 and due to Dirichlet, states that for any real number $x$ and natural number $N$, there exists some $a \in \Z$ and $n \in \N$ with $n \leq N$ satisfying
$$\left| x - \frac{a}{n} \right| \leq \frac{1}{nN}.$$

A corollary of this says that for any \emph{irrational} $x$, there exist infinitely many \emph{coprime} $a \in \Z, n \in \N$ satisfying
$$\left| x - \frac{a}{n} \right| \leq \frac{1}{n^2}.$$

In the years subsequent to this, many similar results were proven which replaced the function $\frac{1}{n^2}$ on the right hand side of this inequality with more general approximating functions $\psi(n)$. One of the most general results proven was the Duffin--Schaeffer theorem (Thm I from \cite{DuffinSchaeffer}), which states that for any function $\psi : \N \to \R_{\geq 0}$ satisfying
$$\sum_{n=1}^{\infty} \psi(n) = \infty$$
and
\begin{equation} \label{eq:classicallimsupcondition} \limsup_{N \to \infty} \frac{\sum_{n=1}^N \frac{\psi(n) \varphi(n)}{n}}{\sum_{n=1}^N \psi(n)} > 0 \end{equation}
(where $\varphi(n)$ is the Euler totient function), the set of those $x \in [0,1]$ which satisfy 
$$\left| x - \frac{a}{n} \right| < \frac{\psi(n)}{n}$$
for infinitely many coprime $a,n$ is of Lebesgue measure $1$.

(The long-standing Duffin--Schaeffer conjecture states that this theorem should be true even without condition \eqref{eq:classicallimsupcondition}.) 

Analogues of these results have also been proven in various different setups. The two examples which will be of particular interest to us are the setup of simultaneous approximation (where we approximate $\R^k$ by $\Q^k$) and that of $p$-adic approximation (where we approximate $\Q_p$ by $\Z_p$). Versions of Dirichlet's theorem and of the Duffin--Schaeffer theorem have been proven in both of these setups. For the simultaneous version of Dirichlet's theorem, see the original paper \cite{Dirichlet1842}; for a $p$-adic version of Dirichlet's theorem, see \cite{Mahler34}. As for the Duffin--Schaeffer theorem, a $p$-adic version is proven in \cite{Haynes2010}, whereas in the simultaneous case, it has been in fact proven that the full conjecture holds for $k \geq 2$ (see \cite{PollingtonVaughan}).

The results we prove in this paper can be seen as combinations of these two settings. They also provide a quantification of the weak approximation theorem, in the same way that results from classical Diophantine approximation quantify the statement that $\Q$ is dense in $\R$.

Our aim is to find a natural method of approximating elements of $\R \times \Q_{p_1} \times \cdots \times \Q_{p_r}$ (where the $p_i$ are \emph{different} primes). As notation for this space, we will first let
$$P = \{ \infty , p_1 , \ldots, p_r \}$$
(so $P$ is a finite set of places of $\Q$, including the infinite place). This will be standard notation throughout this paper; if not stated, $p_1, \ldots, p_r$ will always refer to the finite places contained in $P$, and $r$ will always refer to the number of finite places. We will also use $P_f$ to denote the set of all places of $P$ apart from the infinite one. Then we write
$$\Q_P := \prod_{p \in P} \Q_p = \R \times \Q_{p_1} \times \cdots \times \Q_{p_r}.$$

%Now we note that the theorems of classical approximation, which were originally theorems about $\R$, can also be viewed as theorems about $\R / \Z$ (or its canonical fundamental domain $[0,1)$). We quotient our space by a lattice which is natural to the problem (in the classical case, we quotient $\R$ by $\Z$), and end up with a compact measure space. This makes it much easier to state results from metric number theory, about the measure of the sets satisfying certain properties.
%
%Given this, we want to find such a lattice in $\Q_P$. Consider the space
%
We will use $\iota$ to denote the diagonal embedding of $\Q$ into $\Q_P$. That is to say, we define
\defiso{\iota}{\Q}{\Q_P}{q}{(q,\ldots,q).}

Then the weak approximation theorem states that $\iota(\Q)$ is dense in $\Q_P$. We want to quantify this statement.

We consider elements of $\Q$ now not as quotients from $\Z$, but as quotients from the space
$$P^{-1} \Z := \left\{ \left. n p_1^{\nu_1} \cdots p_r^{\nu_r} \ \right| \ n, \nu_1,\ldots,\nu_r \in \Z \right\}.$$

Standard elements $\beta$ and $\gamma$ of this space will always be given the decompositions
\begin{equation}
\beta = m p_1^{\mu_1} \cdots p_r^{\mu_r} \quad \hbox{and} \quad \gamma = n p_1^{\nu_1} \cdots p_r^{\nu_r},
\label{eq:decompositions}
\end{equation}
where $m$ and $n$ are each coprime to all of the $p_i$. (We call $m$ (resp. $n$) the \emph{non-$P$} part of $\beta$ (resp. $\gamma$).)

We measure the size of elements of $P^{-1}\Z$ by the \emph{level} function $\ell$, given by
$$\ell(\gamma) = \max_{p \in P} |\gamma|_p.$$ 

%This space is a subset of $\Q$, which is in turn a subset of $\R$ and of each of the $\Q_{p_i}$. So we can look at the image $\iota (P^{-1} \Z)$ of this space under the diagonal embedding
%
%\defiso{\iota}{\Q}{\Q_P}{\gamma}{(\gamma,\gamma,\ldots,\gamma).}

The image of $P^{-1}\Z$ under $\iota$ is a cocompact lattice in $\Q_P$, and the quotient space
$$\Q_P / \iota(P^{-1} \Z)$$
can be identified with the fundamental domain
$$[0,1) \times \Z_{p_1} \times \cdots \times \Z_{p_r},$$
which we will denote by $\Z_P$.

(To see this identification, consider $\x = (x_{\infty}, x_{p_1}, \ldots, x_{p_r}) \in \Q_P$, and write
$$x_{p_i} = q_i + z_i,$$
where $q_i \in \{p_i\}^{-1} \Z$ and $z_i \in \Z_{p_i}$. Translating $\x$ by $\iota(-q_1-\cdots-q_r)$ gives an element of
$$\R \times \Z_{p_1} \times \cdots \times \Z_{p_r},$$
and then translating by $\iota(n)$ for some suitable integer $n$ gives a representative of $\Z_P$ as required. It is easy to show this representative must be unique.)

%We will then approximate elements of this fundamental domain $\Z_P$ by elements of our diagonal embedding $\iota(P^{-1} \Z)$.
%
%We also define a function on $P^{-1} \Z$ by
%
%$$\ell(\gamma) = \max_{p \in P} |\gamma|_p,$$
%
%which we call the \emph{level} of $\gamma$. 
%
In \S2, we use these notions to prove the following analogue of Dirichlet's theorem.

\begin{theorem}\label{thm:diagonaldirichlet} Let $\mathbf{x} \in \Q_P$ and let $N \in \N$. Then if we define a function on places of $\Q$ by
$$\dbbk{p} = \begin{array}{rl} p & \hbox{$p$ prime,} \\ 1 & p = \infty, \end{array}$$
there exists some $\beta,\gamma \in P^{-1} \Z$ with $\ell(\gamma) \leq N$ and $\gamma > 0$ satisfying 
$$|\gamma x_p - \beta|_p \leq \frac{\dbbk{p}}{N}$$
for all $p \in P$. \end{theorem}

We will also prove an analogue of the corollary to Dirichlet's theorem given above.

In the remainder of the paper, we will go on to prove a version of the Duffin--Schaeffer theorem in this setup. To state our result, we first define a set $\N_P$ by
$$N_P = \{ n \in \N \ | \ (n,p) = 1 \hbox{ for all } p \in P_f \}.$$

Then for each $p \in P$, we take a function $\psi_p : \N_P \to \R_{\geq 0}$, and restrict these such that for each $n \in \N_P$ we have
\begin{equation}
\psi_{\infty}(n) \leq \frac{1}{2n} \hbox{ and } \psi_p(n) \leq 1 \hbox{ for all } p \in P_f.
\label{eq:boundbyL}
\end{equation}

(This restriction corresponds to the (implicit) condition in Duffin and Schaeffer's original paper (\cite{DuffinSchaeffer}) that $\psi(n) \leq \frac{1}{2}$.)

We package all of the information from these functions up into one function $\PSI$, by defining
\defiso{\PSI}{\N_P}{\R_{\geq 0}^{r+1}}{n}{\oplus_{p \in P} \psi_p(n),}
where $\oplus$ denotes the direct product. We also define a function $\Psi$ by
\defiso{\Psi}{\N_p}{\R_{\geq 0}}{n}{\prod_{p \in P} \psi_p(n),}
where $\Pi$ denotes the arithmetic product.

For an element $q \in \Q$, we can always write it uniquely as
$$q = (-1)^b \frac{a}{n} p_1^{\chi_1} \cdots p_r^{\chi_r},$$
where $b \in \{0,1\}$ and $a$ and $n$ are both in $\N_P$. We call $n$ the \emph{non-$P$ denominator} of $q$, and write $\dnm_P q = n$. Then if we take any element $\x \in \Q_P$, we say that an element $q \in \Q$ forms a \emph{$\PSI$-good approximation to $\x$} if we have
$$|x_p - q|_p \leq \psi_p(\dnm_P(q))$$
for each $p \in P$. Using this definition, we can define a set $A'_P(\PSI)$ by
\begin{equation} \label{eq:definitionofA} A'_P(\PSI) = \left\{ \x \in \Z_P \ \left| \ \begin{array}{c} \hbox{there exist infinitely many $q \in \Q$ such} \\ \hbox{that $q$ is a $\PSI$-good approximation to $\x$} \end{array}  \right. \right\}. \end{equation}

Finally, we fix the measure on $\Q_P$ to be the product measure of Lebesgue measure on $\R$ and normalised Haar measures on the $\Q_{p_i}$.

Then our theorem is as follows.

\begin{theorem}\label{thm:diagonalduffinschaeffer} If we have
\begin{equation} \label{eq:diagonaldivergencecondition} \sum_{n \in \N_P} \varphi(n) \Psi(n) = \infty \qquad \end{equation}
and
\begin{equation} \label{eq:diagonallimsupcondition} \limsup_{N \to \infty} \frac{\sum_{\substack{n \in \N_P \\ n \leq N}} \varphi(n) \Psi(n)}{\sum_{\substack{n \in \N_P \\ n \leq N}} n \Psi(n)} > 0,\end{equation}
then $A'_P(\PSI)$ has measure $1$. \end{theorem}

%There is also a result of Haynes \cite[Theorem 4]{Haynes2010} which gives a simultaneous approximation analogue in $\R^{\ell} \times \Q_{p_1} \times \cdots \times \Q_{p_r}$. While this result looks similar to our Theorem \ref{thm:diagonalduffinschaeffer}, they are distinct results, as we are approximating by elements of a diagonal embedding.

%The original Duffin--Schaeffer theorem is still true if you remove this condition (see \cite{PollingtonVaughan}), and our theorem seems like it should also be true without the $\frac{1}{2 L(\gamma)}$ condition. However, we do not prove that in this paper.

In \S\ref{sec:duffinschaefferstatement}, we will first show a partial converse to Theorem \ref{thm:diagonalduffinschaeffer}, that convergence of the sum in \eqref{eq:diagonallimsupcondition} implies that $\mathcal{A}(\psi)$ is of measure $0$. This will follow almost directly from the convergence part of the Borel--Cantelli lemma, as in the classical case. 

%We will also use \S\ref{sec:duffinschaefferstatement} to show that the natural extension of our theorems to the adeles are not true.

In the final three sections, we first develop some of the machinery required to prove Theorem \ref{thm:diagonalduffinschaeffer}, and then conclude by proving the theorem. In $\S4$, we prove the following zero-one law, which is an analogue to Theorem 1 in \cite{Gallagher61}.

%ZERO ONE LAW

\begin{theorem}\label{thm:diagonalzeroonelaw} For each $\PSI$ satisfying \eqref{eq:boundbyL}, the set $A'_P(\PSI)$ (as defined by \eqref{eq:definitionofA}) has measure $0$ or $1$. \end{theorem}

In \S5, we prove a technical lemma (Lemma \ref{thm:diagonaloverlapestimates}), which provides estimates for the measure of the overlap between certain sets. 

Finally, in \S6 we use Theorem \ref{thm:diagonalzeroonelaw} and Lemma \ref{thm:diagonaloverlapestimates} to prove Theorem \ref{thm:diagonalduffinschaeffer}.

\textit{Acknowledgements.} The author would like to thank Alan Haynes for his helpful feedback and advice regarding this work, and Adam Morgan and Andrew Corbett for their useful comments on the paper.
%==================================================================
%==================================================================
\section{Dirichlet's theorem in the diagonal setting}\label{sec:diagonaldirichlet}
%Before we prove Theorem \ref{thm:diagonaldirichlet}, we will give some justification as to why this is a natural theorem to prove.
%
%First, we should note that the most na\"{i}ve attempt to extend Dirichlet's theorem to $\R \times \Q_p$ (and more generally to $\R \times \Q_{p_1} \times \cdots \times \Q_{p_r}$ --- we will do our preliminary justification in $\R \times \Q_p$, where things are much the same as in the general case) doesn't work. There are many $\x \in \R \times \Q_p$ and $N \in \N$ such that
%
%$$d(n\x,(a,a)) > \frac{1}{N}$$
%
%for all $a \in \Z$ and $n \leq N$. (Take, for example, $\x = (0,p^{-1})$ and $N = 2$.) Even if we weaken our conjecture substantially, replacing $\frac{1}{N}$ by $\frac{C}{N}$ for some large constant $C$, and requiring that our $N$ must be greater than some large bound $N_0$, counterexamples can be found by picking $\x = (0,p^{-t})$ for some large $t$.
%
%We get around this problem by letting $a$ and $n$ (or their analogues) be elements of $\Z[\frac{1}{p}]$, rather than elements of $\Z$. We measure the size of an element $\gamma$ of $\Z[\frac{1}{p}]$ by its level
%
%$$\ell(\gamma) = \max \{ |\gamma|_{\infty}, |\gamma|_p \}.$$
%
%So now we are ready to prove our theorem.
\begin{proof}[Proof of Theorem \ref{thm:diagonaldirichlet}] Consider the points of the form
$$\zeta \x - \iota(\beta_{\zeta}),$$
where $\zeta$ ranges over all elements of $P^{-1} \Z$ with $\ell(\zeta) \leq N$ and $\zeta \geq 0$, and the $\beta_{\zeta} \in P^{-1} \Z$ are chosen so that the points lie in $\Z_P$. This can be done uniquely since $\Z_P$ is a fundamental domain for our quotient space
$$\Q_P / \iota(P^{-1} \Z).$$

If any two of these points are equal, then taking their difference yields $\beta$ and $\gamma$ such that $\gamma \x - \beta = 0$. So we may assume they are all distinct. To apply the pigeonhole argument we want to use, we need to know how many points of this form there are. Since they are all distinct, this is equivalent to calculating the size of the set
$$Z_N = \left\{ \left. \zeta \in P^{-1} \Z \ \right| \ \ell(\zeta) \leq N, \ \zeta \geq 0 \right\}.$$

For each $i = 1, \ldots, r$, we can find $n_i \in \Z_{\geq 0}$ such that
$$p_i^{n_i} \leq N < p_i^{n_i + 1}.$$

So since $|\cdot|_p$ takes discrete values from $\{ p^m \ | \ m \in \Z \}$, we want
$$0 \leq |\zeta|_{\infty} \leq N \qquad \hbox{and} \qquad 0 \leq |\zeta|_{p_i} \leq p_i^{n_i}$$
for each $i$.

The $p_i$-adic conditions tell us we are dealing with a subset of 
$$\frac{1}{p_1^{n_1} \cdots p_r^{n_r}}\Z = \left\{ \left. \frac{n}{p_1^{n_1} \cdots p_r^{n_r}} \ \right| \ n \in \Z \right\}.$$

(Note that in this set, we only make a certain element (and its factors) invertible, and hence this set should not be confused with something of the form $P^{-1} \Z$, where we make \emph{all powers} of certain elements invertible.) Combining this with the first condition, we get
$$\# Z_N = N p_1^{n_1} \cdots p_r^{n_r} + 1.$$

%Now, let $M = \max \{ p_1, \ldots, p_r \}$, and let $\eta$ denote the corresponding $n_i$ (that is, if $M = p_j$, then $\eta = n_j$). Then we have $N > M^{\nu}$, and hence
%%
%$$\# Z_N > M^{\eta} p_1^{n_1} \cdots p_r^{n_r}.$$
%
Now consider the boxes of the form
$$\left. \left[ \frac{r}{N} , \frac{r+1}{N} \right. \right) \times (s_1 + p_1^{n_1} \Z_{p_1}) \times \cdots \times (s_r + p_r^{n_r} \Z_{p_r}),$$
where $0 \leq r \leq N - 1$ and $0 \leq s_i \leq p_i^{n_i} - 1$.

Since we have $N p_1^{n_1} \cdots p_r^{n_r}$ boxes which cover our fundamental domain, and we have
$$\{\zeta \ | \ \ell(\zeta) \leq N \} > N p_1^{n_1} \cdots p_r^{n_r},$$
there will be two elements (corresponding to $\zeta$ and $\xi$, say) in one box. But any two elements $\x,\y$ in one box must satisfy
$$|x_p - y_p|_p < \frac{\dbbk{p}}{N}$$
for each $p \in P$.

So then, assuming without loss of generality that $|\zeta|_{\infty} > |\xi|_{\infty}$, we define $\gamma = \zeta - \xi$ and $\beta = \beta_{\zeta} - \beta_{\xi}$. Then we have $\ell(\gamma) \leq N$ and
$$|\gamma x_p - \beta|_p \leq \frac{\dbbk{p}}{N}$$
for each $p \in P$ as required. \end{proof}

We now prove a corollary of this theorem, which is an analogue of the corollary to Dirichlet's theorem given in the introduction. To do this, we first need to note what it means for two elements $\beta, \gamma \in P^{-1} \Z$ to be coprime.

Let $\beta,\gamma$ be elements of this space, and decompose them as in \eqref{eq:decompositions}. Then we say that $\beta$ and $\gamma$ are coprime if $m$ and $n$ are coprime in the usual sense, and define $\gcd_P(\beta,\gamma) := \gcd(m,n)$. This definition of coprimality comes from the fact that $P^{-1} \Z$ is a UFD; by adjoining the inverses of each of the $p_i$, we have made them into units, and hence we are justified in ignoring them as factors of $\beta$ and $\gamma$.

Now we state our corollary.

\begin{corollary}\label{cor:diagonaldirichlet} Let $\mathbf{x} \in \Q_P - \iota(\Q)$. Then there exist infinitely many coprime $\beta,\gamma \in P^{-1} \Z$ such that
$$|\gamma x_p - \beta|_p \leq \frac{\dbbk{p}}{|\gamma|_p},$$
for each $p \in P$. \end{corollary}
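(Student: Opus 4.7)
The plan is to deduce the corollary from Theorem \ref{thm:diagonaldirichlet} by the standard two-step argument: apply Dirichlet for arbitrarily large $N$, then show the resulting pairs cannot all coincide with a finite list.

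\textbf{Step 1: Reduction to coprime pairs.} For each $N \in \N$, Theorem \ref{thm:diagonaldirichlet} yields $\beta, \gamma \in P^{-1}\Z$ with $\gamma > 0$, $\ell(\gamma) \leq N$, and $d(\gamma\x, \beta) \leq M/N$. Writing $\beta = m p_1^{\mu_1} \cdots p_r^{\mu_r}$ and $\gamma = n p_1^{\nu_1} \cdots p_r^{\nu_r}$ as in \eqref{eq:decompositions}, let $d = \gcd(m,n)$, and set $\beta' = \beta/d$, $\gamma' = \gamma/d$. Since $d$ is a positive integer coprime to every $p_i$, one checks $|d|_{p_i} = 1$ and $|d|_\infty = d \geq 1$; hence $|\gamma'|_{p_i} = |\gamma|_{p_i}$ and $|\gamma'|_\infty \leq |\gamma|_\infty$, giving $\ell(\gamma') \leq \ell(\gamma) \leq N$. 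The same computation shows $d(\gamma'\x,\beta') = \max_p |\gamma x_p - \beta|_p/|d|_p \leq d(\gamma\x,\beta) \leq M/N \leq M/\ell(\gamma')$. Thus $(\beta',\gamma')$ is a coprime pair satisfying the required inequality.

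\textbf{Step 2: Infinitely many such pairs.} Suppose, for contradiction, that only finitely many coprime pairs $(\beta_1,\gamma_1), \ldots, (\beta_k, \gamma_k)$ satisfy $d(\gamma_i \x, \beta_i) \leq M/\ell(\gamma_i)$. Since $\x \notin \iota(\Q)$, we have $\gamma_i \x \neq \iota(\beta_i)$ in $\Q_P$ for every $i$ (otherwise $\x = \iota(\beta_i/\gamma_i) \in \iota(\Q)$, as $\gamma_i \neq 0$), so each $d(\gamma_i \x, \beta_i)$ is strictly positive. Let $\varepsilon = \min_{1 \leq i \leq k} d(\gamma_i \x, \beta_i) > 0$, and pick $N$ with $M/N < \varepsilon$. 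Running Step 1 with this $N$ produces a coprime pair $(\beta',\gamma')$ with $d(\gamma'\x, \beta') \leq M/N < \varepsilon$. This pair must coincide with some $(\beta_i, \gamma_i)$ from our finite list, contradicting the definition of $\varepsilon$.

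The only subtlety is Step 1, where one must verify that passing to coprime form preserves both the level bound and the distance bound; this works precisely because the $\gcd$ is a rational integer coprime to each $p_i$, so it has $p_i$-adic absolute value $1$ and only shrinks the archimedean contribution. Step 2 is then a routine pigeonhole-style finish, using the hypothesis $\x \notin \iota(\Q)$ exactly to guarantee that no approximation is exact.
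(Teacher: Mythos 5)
Your proposal is correct and follows essentially the same approach as the paper: apply Theorem \ref{thm:diagonaldirichlet} for arbitrarily large $N$, reduce to coprime form by dividing out $\gcd(m,n)$ (using that the gcd is coprime to each $p_i$ so has $p_i$-adic absolute value $1$, hence only the archimedean contribution can shrink), and derive a contradiction from finiteness by choosing $N$ so that $M/N$ undercuts the minimum of the finitely many strictly positive distances. One small remark: the paper writes $d(\gamma_n\x,\beta_n)=\frac{1}{\gcd(B_n,C_n)}d(C_n\x,B_n)$ as an equality, which need not hold when the maximum is attained at a finite place; your version, which states only the inequality $d(\gamma'\x,\beta')\leq d(\gamma\x,\beta)$, is the more careful (and correct) formulation, and it is all that the argument requires.
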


\begin{proof}[Proof of Corollary \ref{cor:diagonaldirichlet}] For each $n \in \N$, let $B_n, C_n \in P^{-1} \Z$ be such that $\ell(C_n) \leq n$ and 
$$|C_n x_p - B_n|_p \leq \frac{\dbbk{p}}{n}$$
for each $p \in P$.

Let $\beta_n = \frac{B_n}{\gcd_P(B_n,C_n)}$ and $\gamma_n = \frac{C_n}{\gcd_P(B_n,C_n)}$. Then we have
$$|\gamma_n x_p - \beta_n|_p = \frac{1}{|\gcd_P(B_n,C_n)|_p} |C_n x_p - B_n|_p \leq |C_n x_p - B_n|_p \leq \frac{\dbbk{p}}{n} \leq \frac{\dbbk{p}}{\ell(C_n)} \leq \frac{\dbbk{p}}{\ell(\gamma_n)}.$$

So now we just need to show that of the $(\beta_n,\gamma_n)$, infinitely many are distinct.

But suppose that there are only finitely many distinct pairs
$$(\beta_{n_1},\gamma_{n_1}),\ldots,(\beta_{n_m},\gamma_{n_m}),$$
and consider
$$ C := \min_{i=1,\ldots,m} \max_{p \in P} |\gamma_{n_i} x_p - \beta_{n_i}|_p.$$

If $C = 0$, then for some $(\beta_n,\gamma_n)$ we have
$$|\gamma_n x_p - \beta_n|_p = 0$$
for all $p \in P$. But this can only happen when $\x \in \iota(\Q)$, and we assumed otherwise. 

However, if $C > 0$, then let $M$ be the maximum of the prime numbers in $P$, and then take $N$ to be some natural number with $N > \frac{M}{C}$. Then we consider $(\beta_N, \gamma_N)$. We have
$$|\gamma_N x_p - \beta_N|_p \leq \frac{\dbbk{p}}{N} < C \frac{\dbbk{p}}{M} < C$$
for each $p \in P$, which means that
$$\max_{p \in P} |\gamma_N x_p - \beta_N|_p < C,$$
giving a contradiction. \end{proof}
%==================================================================
\section{A partial converse to Theorem \ref{thm:diagonalduffinschaeffer}}
\label{sec:duffinschaefferstatement}
Before we prove Theorem \ref{thm:diagonalduffinschaeffer}, we will prove the following result.

For each $\PSI$ satisfying \eqref{eq:boundbyL},

\begin{lemma} Let $\PSI : \N_P \to \R_{\geq 0}^{r + 1}$ satisfy \eqref{eq:boundbyL} and be such that the sum
$$\sum_{n \in \N_P} \varphi(n) \Psi(n)$$
converges. Then the set $A'_P(\psi)$ (as defined in \eqref{eq:definitionofA}) has measure $0$. \end{lemma}

The classical analogue states that if $\psi : \Z \to \R_{\geq 0}$ is such that the sum
$$\sum_{n=1}^{\infty} \frac{\varphi(n) \psi(n)}{n}$$
converges, then the set of those $x \in [0,1]$ which satisfy
$$|x - \frac{a}{n}| < \frac{\psi(n)}{n}$$
for infinitely many coprime $a,n$ is of Lebesgue measure $0$. This follows almost directly from the convergence part of the Borel--Cantelli lemma. In the diagonal case, we need to do a little more work.

\begin{proof} We note that if for each $n \in \N_P$ we define
\begin{equation} \label{eq:definitionofAn} A'_{P,n}(\PSI) = \left\{ \x \in \Z_P \ \left| \ \begin{array}{c} \hbox{there exists some $q \in \Q$ with $\dnm_P(q) = n$} \\ \hbox{such that $q$ is a $\PSI$-good approximation to $\x$} \end{array}  \right. \right\}, \end{equation}
then we can write $A'_P(\PSI)$ as a limsup set
$$A'_P(\PSI) = \limsup_{n \in \N_P} A'_{P,n}(\PSI).$$

Now we want to rewrite our sets $A'_{P,n}(\PSI)$ to make them easier to work with. 

If $q \in \Q$ has $\dnm_P q = n$, then we have
$$q = p_1^{\chi_1} \cdots p_r^{\chi_r} \frac{a}{n}$$
with $(a,p_1\cdots p_r n) = (n,p_1 \cdots p_r) = 1$. 

Since we have condition \eqref{eq:boundbyL}, the only $q \in \Q$ which can be $\PSI$-good approximations for $\x \in \Z_P$ must have $\iota(q) \in \Z_P$. So we need $q \in [0,1)$ and $q \in \Z_{p_i}$ for $i = 1, \ldots, r$. The $p$-adic conditions translate to $\chi_i \geq 0$ for $i = 1, \ldots, r$, and hence we just have
$$q = \frac{a}{n}$$
with $(a p_1 \cdots p_r,n) = 1$. Then the condition $q \in [0,1)$ means that $0 < a < n$.

So we have that
$$A'_{P,n}(\PSI) = \bigcup_{\substack{a = 1 \\ (a,n) = 1}}^n \Ball \left( \frac{a}{n} , \PSI(n) \right),$$
where the boxes $\Ball$ are defined as a direct product of balls:
$$\Ball \left( \frac{a}{n} , \PSI(n) \right) = \prod_{p \in P} B_p \left( \frac{a}{n} , \psi_p(n) \right).$$

Now we want to use this rewriting to calculate the measure of $A'_{P}(\PSI)$. We have
$$\lambda(A'_{P,n}(\PSI)) = \varphi(n) \cdot \prod_{p \in P} \lambda \left( B_p \left( \frac{a}{n} , \psi_p(n) \right) \right).$$

Then since we have
$$\lambda \left( B_{\infty} \left( \frac{a}{n} , \psi_{\infty}(n) \right) \right) = 2 \psi_{\infty}(n)$$
and 
$$\frac{\psi_p(n)}{p} < \lambda \left( B_p \left( \frac{a}{n} , \psi_p(n) \right) \right) \leq \psi_p(n),$$
we have that
$$\frac{2}{p_1 \cdots p_r} \varphi(n) \Psi(n) < \lambda(A'_{P,n}(\PSI)) \leq 2 \varphi(n) \Psi(n).$$

So if
$$\sum_{n \in \N_P} \varphi(n) \Psi(n) < \infty,$$
then we have that
$$\sum_{n \in \N_P} \lambda(A'_{P,n}(\PSI)) < \infty,$$
and hence by the convergence part of the Borel--Cantelli lemma, $A'_{P,n}(\PSI)$ has measure $0$. \end{proof}

Now we turn our attention to the proof of our main theorem.
%==================================================================
%==================================================================
%CHECKEDABOVEHERE==========================================================
\section{The zero-one law}
In this section, we prove Theorem \ref{thm:diagonalzeroonelaw}. For this, we will need a preliminary lemma, which is a direct analogue of Lemma 2 from \cite{Gallagher61}.

\begin{lemma}\label{lem:gallagherlem1QP} Let $\{ I_k \}$ be a sequence of boxes in $\Q_P$ such that $\lambda(I_k) \to 0$ (where a box is a product of balls from each of the constituent spaces), and let $\{U_k\}$ be a sequence of measurable sets (also in $\Q_P$) such that, for some positive $\varepsilon < 1$, we have
$$U_k \subset I_k \qquad \hbox{and} \qquad \lambda(U_k) \geq \varepsilon \lambda(I_k).$$

Then 
$$\lambda \left( \limsup_{k \to \infty} I_k \right) = \lambda \left( \limsup_{k \to \infty} U_k \right).$$ \end{lemma}

\begin{proof} We define
$$\mathcal{I} = \bigcap_{L=1}^{\infty} \bigcup_{k \geq L} I_k \left( = \limsup_{k \to \infty} I_k \right),$$
$$\mathcal{U}_M = \bigcup_{k \geq M} U_k,$$
$$\mathcal{D}_M = \mathcal{I} - \mathcal{U}_M.$$

Then the lemma can be restated as
$$\lambda \left( \bigcup_{M=1}^{\infty} \mathcal{D}_M \right) = 0.$$

We prove that each individual $\mathcal{D}_M$ has measure $0$. In order to do this, we need to define the notion of a density point of a set $A \subseteq \Q_P$.

For $\x \in \Q_P$, define
$$d_{\varepsilon}(\x) := \frac{\lambda(A \cap B_{\varepsilon}(\x))}{\lambda(B_{\varepsilon}(x))},$$
where
$$\underline{B}(\x,\varepsilon) = \prod_{p \in P} B_{p}(x_p,\varepsilon).$$

Then we say $\x$ is a density point of $A$ if $\lim_{\varepsilon \to 0} d_{\varepsilon}(\x)$ exists and is equal to $1$.

By the Lebesgue density theorem, and its analogue in $\Q_p$, almost all points $\x$ are density points. (For the $p$-adic analogue, see Theorem I in \cite{PopkenTurkstra} or page 14 of \cite{Lutz}.)

So now suppose for a contradiction that $\x_0$ is a density point of $\mathcal{D}_M$ in $\mathcal{D}_M$.

Firstly, since we have that $\x_0 \in I_k$ for infinitely many $k$ (by the definition of $\mathcal{D}_M$) and that $\lambda(I_k) \to 0$, if we restrict to those $k$ such that $\x_0 \in I_k$, we have
$$\lambda(\mathcal{D}_M \cap I_k) \sim \lambda(I_k)$$
as $k \to \infty$ (since $\x_0$ is a density point of $\mathcal{D}_M$).

However, we also have that $\mathcal{D}_M \cap U_k = \emptyset$ for any $k \geq M$, and hence $U_k$ and $\mathcal{D}_M \cap I_k$ are disjoint subsets of $I_k$. From this, we get
$$\lambda(I_k) \geq \lambda(U_k) + \lambda(\mathcal{D}_M \cap I_k) \geq \varepsilon \lambda(I_k) + \lambda(\mathcal{D}_M \cap I_k),$$
and hence
$$\lambda(\mathcal{D}_M \cap I_k) \leq (1 - \varepsilon) \lambda(I_k),$$
contradicting our first part. \end{proof}

We also want to prove another result, namely the following.

\begin{lemma}\label{lem:gallagherlem2QP} Let $q,s \in \Z$ be such that $q > 1$, and define maps $T_{\infty}, T_{p_1}, \ldots, T_{p_r}$ by
$$T_{\infty} : x \mapsto q\x + \iota \left( \frac{s}{q} \right) \mod \iota \left( P^{-1}\Z \right), \quad T_{p_i} : \x \mapsto \frac{1}{p_i} \x \mod \iota \left( P^{-1} \Z \right)$$
which send $\Z_P$ to itself. Then if a set $A \subseteq \Z_P$ satisfies $T_{\infty}(A) \subseteq A$ and $T_{p_i}(A) \subseteq A$ for $i=1,\ldots,r$, then $A$ has measure $0$ or $1$. \end{lemma}
\begin{proof}[Proof of Lemma \ref{lem:gallagherlem2QP}] Suppose that $A$ is of positive measure. Then $A$ has a density point $\y$. By the definition, for any $\delta > 0$, we can find an $E > 0$ such that for all $B_P(\y, \varepsilon)$ with $\varepsilon < E$, we have
$$\frac{\lambda_P(A \cap B_P(\y,\varepsilon))}{\lambda_P(B_P(\y,\varepsilon))} \geq 1 - \delta.$$

For each $\delta > 0$, consider such an $\varepsilon$. Then for each $p_i$, take $m_i$ to be the unique integer such that
$$p_i^{-m_i} \leq \varepsilon < p_i^{-m_i + 1}.$$

Then we have
$$B_P(\y,\varepsilon) = B_{\infty}(y_{\infty},\varepsilon) \times \prod_{p_i} (y_{p_i} + p_i^{m_i} \Z_{p_i}).$$

We have 
$$T_{p_r}^{m_r}(\cdots(T_{p_1}^{m_1}(B_P(\y,\varepsilon)))) = B_{\infty}(z_{\infty}, \varepsilon p_1^{-m_1} \cdots p_r^{-m_r}) \times \prod_{p_i} (z_{p_i} + \Z_{p_i})$$
for some $\z = (z_{\infty}, z_{p_1}, \ldots, z_{p_r}) \in \Z_P$. So now take $m_{\infty}$ such that
$$1 \leq q^{m_{\infty}} \varepsilon p_1^{-m_1} \cdots p_r^{-m_r} < q,$$
and define
$$T = T_{\infty}^{m_{\infty}} \circ T_{p_r}^{m_r} \circ \cdots \circ T_{p_1}^{m_1}.$$

Then some translate of $T(B_P(\y,\varepsilon))$ by an element of $\iota(P^{-1}(\Z))$ completely covers $\Z_P$, and we have
$$1 \leq \lambda_P(T(B_P(\y,\varepsilon))) < q.$$

The map $T$ expands the measure of sets by $q^{m_{\infty}}$, and hence we have
$$\frac{\lambda_P(T(A \cap B_P(\y,\varepsilon)))}{\lambda_P(T(B_P(\y,\varepsilon)))} = \frac{q^{m_{\infty}} \lambda_P(A \cap B_P(\y,\varepsilon))}{q^{m_{\infty}} \lambda_P(B_P(\y,\varepsilon))} \geq 1 - \delta.$$

So
$$\lambda_P(T(A \cap B_P(\y,\varepsilon))) \geq (1 - \delta) \lambda_P(T(B_P(\y,\varepsilon))).$$

We have
$$T(A \cap B_P(\y,\varepsilon)) = T(A) \cap T(B_P(\y,\varepsilon)),$$
and hence the difference between $T(A) \cap T(B_P(\y,\varepsilon))$ and $T(B_P(\y,\varepsilon))$ has at most measure
$$\delta \lambda_P(T(B_P(\y,\varepsilon))).$$

Then since we know that
$$1 \leq \lambda_P(T(B_P(\y,\varepsilon))) < q,$$
the difference has measure at most $q \delta$.

We have
$$T(A) \cap T(B_P(\y,\varepsilon)) \subseteq T(A) \subseteq A,$$
and we know that $T(B_P(\y,\varepsilon))$ covers $\Z_P$. So the difference between $A$ and $\Z_P$ has at most measure $q \delta$. Taking $\delta \to 0$ completes the proof. \end{proof}

Now we are in a position to prove our zero-one law. 

\begin{proof}[Proof of Theorem \ref{thm:diagonalzeroonelaw}] For each prime $\pi > p_1 \cdots p_r$ and for each $\nu \in \N$, we define sets $\mathfrak{A}(\pi^{\nu})$ and $\mathfrak{B}(\pi^{\nu})$ by
$$\mathfrak{A}(\pi^{\nu}) = \left\{ \x \in \Z_P \ \left|  \begin{array}{c} \hbox{there exist infinitely many $q \in \Q$ with $\pi \nmid \dnm_P(q)$} \\ \hbox{such that $q$ is a $\pi^{\nu-1} \PSI$-good approximation to $\x$} \end{array} \right. \right\}$$
and
$$\mathfrak{B}(\pi^{\nu}) = \left\{ \x \in \Z_P \ \left|  \begin{array}{c} \hbox{there exist infinitely many $q \in \Q$ with $\pi \mid\mid \dnm_P(q)$} \\ \hbox{such that $q$ is a $\pi^{\nu-1} \PSI$-good approximation to $\x$} \end{array} \right. \right\}$$
where $a \mid\mid b$ means that $a$ divides $b$ \emph{exactly} (i.e. $a \mid b$ but $a \nmid b$).

Note that both $\mathfrak{A}(\pi)$ and $\mathfrak{B}(\pi)$ are subsets of $A'_P(\PSI)$, and also note that we have
$$\mathfrak{A}(\pi) \subseteq \mathfrak{A}(\pi^2) \subseteq \mathfrak{A}(\pi^3) \subseteq \cdots$$
and 
$$\mathfrak{B}(\pi) \subseteq \mathfrak{B}(\pi^2) \subseteq \mathfrak{B}(\pi^3) \subseteq \cdots.$$

The set $\mathfrak{A}(\pi)$ can be written as a limsup of boxes of measure $\sim \Psi(n)$, where $n \to \infty$. Then, since we assumed that
$$\Psi(n) \leq \frac{1}{2n},$$
the measures tend to zero, and we can apply Lemma \ref{lem:gallagherlem1QP} to give us that
$$\lambda_P(\mathfrak{A}(\pi^{\nu})) = \lambda_P(\mathfrak{A}(\pi))$$
for all $\nu \in \N$. Hence (since they form a chain) the \emph{union} $\mathfrak{A}^*(\pi)$ of all of the $\mathfrak{A}(\pi^{\nu})$ must also have measure $\lambda_P(\mathfrak{A}(\pi))$.

The same argument gives us that the union $\mathfrak{B}^*(\pi)$ of all of the $\mathfrak{B}(\pi^{\nu})$ has measure $\lambda_P(\mathfrak{B}(\pi))$. 

Now we want to construct maps $T_A$ and $T_B$ such that
$$T_A(\mathfrak{A}^*(\pi)) \subseteq \mathfrak{A}^*(\pi) \aand T_B(\mathfrak{B}^*(\pi)) \subseteq \mathfrak{B}^*(\pi).$$

Let $\x \in \mathfrak{A}(\pi^{\nu})$. Then there are infinitely many $q$ with $\pi\nmid\dnm_P(q)$ such that $q$ is a $\pi^{\nu-1} \PSI$-good approximation to $\x$. For each such $q$, we have
$$ |x_p - q|_p \leq \pi^{\nu-1} \psi_p(\dnm_P(q))$$
for each $p \in P$. Now consider the element $\frac{\pi}{p_1 \cdots p_r} \x$. We have
\begin{align*} \left| \frac{\pi}{p_1 \cdots p_r} x_p - \frac{q \pi}{p_1 \cdots p_r} \right|_p &= \left| \frac{\pi}{p_1 \cdots p_r} \right|_p |x_p - q|_p \\ &\leq \left| \frac{\pi}{p_1 \cdots p_r} \right|_p \pi^{\nu-1} \psi_p(\dnm_P(q)) \\ &\leq \pi^{\nu} \psi_P(\dnm_P(q)), \end{align*}
where the last inequality comes from the fact that 
$$\left| \frac{\pi}{p_1 \cdots p_r} \right|_p < \pi$$
for each $p \in P$. The element $\frac{q \pi}{p_1 \cdots p_r}$ also has the same non-$P$ denominator as $q$. So we have that 
$$\frac{\pi}{p_1 \cdots p_r} \x \in \mathfrak{A}(\pi^{\nu + 1}),$$
and hence the map
$$\x \mapsto \frac{\pi}{p_1 \cdots p_r} \x \mod \iota(P^{-1}\Z)$$
sends $\mathfrak{A}(\pi^{\nu})$ to $\mathfrak{A}(\pi^{\nu+1})$, therefore sending $\mathfrak{A}^*(\pi)$ into itself. Denote this map by $T_A$.

By a very similar argument, we can show that the map $T_B$ given by
$$x \mapsto \frac{\pi}{p_1 \cdots p_r} \x + \frac{p_1 \cdots p_r}{\pi} \mod \iota(P^{-1}\Z)$$
sends $\mathfrak{B}^*(\pi)$ into itself.

Next, we can apply Lemma \ref{lem:gallagherlem2QP} to show us that both $T_A$ and $T_B$ are metrically transitive. So for any prime $\pi > p_1 \cdots p_r$, we have that $\mathfrak{A}^*(\pi)$ and $\mathfrak{B}^*(\pi)$ are measure $0$ and $1$, and hence so are $\mathfrak{A}(\pi)$ and $\mathfrak{B}(\pi)$. 

Since $\mathfrak{A}(\pi)$ and $\mathfrak{B}(\pi)$ are subsets of $A'_P(\PSI)$, if either of them is measure $1$ for any prime $\pi > p_1 \cdots p_r$, we must have that $A'_P(\PSI)$ is also measure $1$. So now we tackle the only remaining case, which is where both $\mathfrak{A}(\pi)$ and $\mathfrak{B}(\pi)$ are measure $0$ for all primes $\pi > p_1 \cdots p_r$.

For each of those primes, define a set $\mathfrak{C}(\pi)$ by
$$\mathfrak{C}(\pi) = \left\{ \x \in \Z_P \ \left|  \begin{array}{c} \hbox{there exist infinitely many $q \in \Q$ with $\pi^2 \mid \dnm_P(q)$} \\ \hbox{such that $q$ is a $\PSI$-good approximation to $\x$} \end{array} \right. \right\}.$$

Since we assumed that $\mathfrak{A}(\pi)$ and $\mathfrak{B}(\pi)$ both have measure $0$, we have
$$\lambda(A'_P(\PSI)) = \lambda(\mathfrak{C}(\pi))$$
for each $\pi$. So now suppose that $A'_P(\PSI)$ has measure $>0$. We want to use this to show that it has measure $1$.

By the Lebesgue density theorem, $A'_P(\PSI)$ must have a density point
$$\x = (x_{\infty}, x_{p_1}, \ldots, x_{p_r}).$$

That is to say, there exists some $\x \in A'_P(\PSI)$ such that for each $\delta > 0$, there exists an $E > 0$ such that for all $\varepsilon \leq E$, we have that
$$\frac{\lambda(A'_P(\PSI) \cap B_P(\x,\varepsilon))} {\lambda(B_P(\x,\varepsilon))} > 1 - \delta.$$

For each $\delta$, take $\varepsilon = E$.

Since $A'_P(\PSI)$ and $\mathfrak{C}(\pi)$ differ by a set of measure $0$, we also have that
$$\frac{\lambda(\mathfrak{C}(\pi) \cap B_P(\x, \varepsilon))} {\lambda(B_P(\x, \varepsilon))} > 1 - \delta$$
for all $\pi$.

For each $\pi$, we have that $\mathfrak{C}(\pi)$ is periodic by $\iota(\frac{1}{\pi})$. So if we define a set
$$U_{\varepsilon}(\x,\pi) = \bigcup_{a \in \Z} B_P \left( \x + \iota \left( \frac{a}{\pi} \right) , \varepsilon \right),$$
then we have
$$\frac{\lambda(\mathfrak{C}(\pi) \cap U_{\varepsilon}(\x,\pi))} {\lambda(U_{\varepsilon}(\x,\pi) \cap \Z_P)} > 1 - \delta.$$

Now we want to show that for each $\varepsilon$, there exists some prime $\pi$ such that
$$\Z_P \subseteq U_{\varepsilon}(\x,\pi).$$

If this is the case, then we will have
$$\mathfrak{C}(\pi) \cap U_{\varepsilon}(\x,\pi) = \mathfrak{C}(\pi) \hbox{\quad and \quad} U_{\varepsilon}(\x,\pi) \cap \Z_P = \Z_P,$$
giving us
$$\lambda(A'_P(\PSI)) = \lambda(\mathfrak{C}(\pi)) > 1 - \delta.$$

Then since we can take $\delta$ arbitrarily small, we will have our result.

For each $p_i$, there exists some $m_i$ such that we have
$$p_i^{m_i} \leq \varepsilon < p_i^{m_i + 1}.$$

Then we have
$$B_P(\y,\varepsilon) = (y_{\infty} - \varepsilon, y_{\infty} + \varepsilon) \times \prod_{p \in P} (y_p + p^m \Z_p).$$

Let $\pi > p_1 \cdots p_r \varepsilon^{-r-1}$. We then want to show that any $\y \in \Z_P$ lies in
$$B_P \left( \x + \iota \left( \frac{a}{\pi} \right) , \varepsilon \right)$$
for some $a \in \Z$. We have
\begin{align*} B_P \left( \x + \iota \left( \frac{a}{\pi} \right) \right) &= \left( x_{\infty} + \frac{a}{\pi} - \varepsilon, x_{\infty} + \frac{a}{\pi} + \varepsilon \right) \\ & \qquad \times \prod_{p \in P} \left( \left( x_p + \frac{a}{\pi} \right) + p^m \Z_p \right). \end{align*}

Consider the real component $y_{\infty}$ of $\y$. To have $\y \in B_P(\x + \iota(\frac{a}{\pi}),\varepsilon)$, we certainly need to have
$$y_{\infty} \in \left( x_{\infty} + \frac{a}{\pi} - \varepsilon, x_{\infty} + \frac{a}{\pi} + \varepsilon \right),$$
which can be transformed to the condition
$$\pi(y_{\infty} - x_{\infty} - \varepsilon) < a < \pi(y_{\infty} - x_{\infty} + \varepsilon).$$

Since $\pi > p_1 \cdots p_r \varepsilon^{-r-1}$, this interval has length
$$> p_1 \cdots p_r \varepsilon^{-r} > p_1^{m_1} \cdots p_r^{m_r},$$
and hence there are at least $p_1^{m_1} \cdots p_r^{m_r}$ consecutive values of $a$ such that
$$y_{\infty} \in \left( x_{\infty} + \frac{a}{\pi} - \varepsilon , x_{\infty} + \frac{a}{\pi} + \varepsilon \right).$$

Now, consider
$$\left(x_{p_1} + \frac{a}{\pi} , \ldots, x_{p_r} + \frac{a}{\pi}\right) \mod p_1^{m_1} \Z_{p_1} \times \cdots \times p_r^{m_r} \Z_{p_r}$$
for exactly $p_1^{m_1} \cdots p_r^{m_r}$ consecutive values of $a$. It is easy to see that these must all be distinct, and hence for one of the $a$ such that
$$y_{\infty} \in \left( x_{\infty} + \frac{a}{\pi} - \varepsilon , x_{\infty} + \frac{a}{\pi} + \varepsilon \right),$$
we also have that
$$y_{p_i} \in \left( x_{p_i} + \frac{a}{\pi} \right) + p_i^{m_i} \Z_{p_i}$$
for $i=1,\ldots,r$. So there exists some $a \in \Z$ such that
$$\y \in B_P \left( \x + \iota \left( \frac{a}{\pi} \right) , \varepsilon \right)$$
as required, and hence we have our theorem. \end{proof}

\section{Overlap estimates}
In this section, we prove the following result, which is analogous to Lemma II in \cite{DuffinSchaeffer}.

\begin{lemma} \label{thm:diagonaloverlapestimates} Let
$$A'_{m,P}(\PSI) = \bigcup_{\substack{a=1 \\ (a,m) = 1}}^m \prod_{p \in P}  B_p \left( \frac{a}{m} , \psi_p(m) \right),$$
$$A'_{n,P}(\PSI) = \bigcup_{\substack{b=1 \\ (b,n) = 1}}^n \prod_{p \in P} B_p \left( \frac{b}{n} , \psi_p(n) \right).$$

Then for $m \neq n$ we have
$$\lambda_P(A'_{m,P}(\PSI) \cap A'_{n,P}(\PSI)) \leq 2^{r+2} mn \Psi(m) \Psi(n).$$ \end{lemma}

\begin{proof} For brevity, we write just $A_n$ for $A'_{n,P}(\PSI)$.

To get an upper bound for the measure of $A_m \cap A_n$, we note that each set is made up of a union of disjoint boxes. (This disjointness comes from our assumptions that $\psi_p(n) < 1$ and $\psi_{\infty}(n) < \frac{1}{2n}$.) We then sum over all pairs of boxes which intersect, with the summand being an upper bound for the measure of their intersection.

A pair of boxes will intersect if and only if their constituent intervals for each place in $P$ intersect. So for the overlap between
$$\prod_{p \in P} B_p \left( \frac{a}{m} , \PSI(m) \right) \hbox{\qquad and \qquad} \prod_{p \in P} B_p \left( \frac{b}{n} , \PSI(n) \right)$$
to be of positive measure, we want the real overlap
$$B_{\infty} \left( \frac{a}{m} , \psi_{\infty}(m) \right) \cap B_{\infty} \left( \frac{b}{n} , \psi_{\infty}(n) \right)$$
to be of positive measure and for each $p_i$-adic overlap
$$B_{p_i} \left( \frac{a}{m} , \psi_{p_i}(m) \right) \cap B_{p_i} \left( \frac{b}{n} , \psi_{p_i}(n) \right)$$
to also be of positive measure.

The real intervals will definitely overlap when
$$\left| \frac{a}{m} - \frac{b}{n} \right| \leq 2 \max \{ \psi_{\infty}(m) , \psi_{\infty}(n) \},$$
and the measure of their overlap will be at most 
$$ 2 \min \{ \psi_{\infty}(m) , \psi_{\infty}(n) \} $$
(since the worst-case scenario is that one interval is completely contained inside the other). 

Similarly, the $p_i$-adic intervals will overlap when
$$\left| \frac{a}{m} - \frac{b}{n} \right|_{p_i} \leq \max \{ \psi_{p_i}(m) , \psi_{p_i}(n) \},$$
and the measure of their overlap will be at most
$$\min \{ \psi_{p_i}(m) , \psi_{p_i}(n) \}.$$

So if we write
\begin{align*} \Delta_{\infty} &= 2 \max \{ \psi_{\infty}(m), \psi_{\infty}(n) \}, \\ \delta_{\infty} &= 2 \min \{ \psi_{\infty}(m) , \psi_{\infty}(n) \}, \\ \Delta_{p_i} &= \max \{ \psi_{p_i}(m) , \psi_{p_i}(n) \}, \\ \delta_{p_i} &= \min \{ \psi_{p_i}(m) , \psi_{p_i}(n) \}, \end{align*}
then we have
$$\lambda(A_m \cap A_n) \leq \left( \prod_{p \in P} \delta_p \right) N(m,n),$$
where
$$N(m,n) := \# \left\{ (a,b) \ \left| \begin{array}{c} 1 \leq a \leq m, \ 1 \leq b \leq n \\ \left| \frac{a}{m} - \frac{b}{n} \right|_p \leq \Delta_p \hbox{ for all } p \in P \end{array}  \right. \right\}.$$

So now we want to estimate $N(m,n)$. Since $|\cdot|_{p_i}$ only takes values which are powers of $p_i$, for each $p_i$ we find the unique $\tau_i \in \Z$ such that 
$$p_i^{-\tau_i} \leq \Delta_{p_i} < p_i^{1 - \tau_i},$$
and then consider the individual cases
$$\left| \frac{a}{m} - \frac{b}{n} \right|_{p_i} = p_i^{-t_i}$$
for $t_i \geq \tau_i$.

We first note that, since $m$ and $n$ are coprime to all of the $p_i$, this equation is equivalent to
$$| an - bm|_{p_i} = p_i^{-t_i}.$$

If we have
$$|an-bm|_{p_i} = p_i^{-t_i}$$
for $i=1,\ldots,r$, then we must have that
$$an - bm = \pm p_1^{t_1} \cdots p_r^{t_r} k$$
for some $k \in \N_P$. So we get
$$N(m,n) = \sum_{t_1 \geq \tau_1} \cdots \sum_{t_r \geq \tau_r} \sum_{\substack{k=1 \\ p_i \nmid k \hbox{ for} \\ i= 1,\ldots,r}}^{\frac{mn \Delta_{\infty}}{p_1^{t_1} \cdots p_r^{t_r}}} \# \left\{ (a,b) \left| \begin{array}{c} 1 \leq a \leq m, \ 1 \leq b \leq n \\ |an - bm|_{\infty} = p_1^{t_1} \cdots p_r^{t_r} k \end{array} \right. \right\}.$$

Now we use a standard lemma from elementary number theory:

\begin{lemma}\label{lem:elementary} Suppose that $m,n \in \N$ and $x \in \Z$. Then the equation
$$an - bm = x$$
has solutions $a,b \in \Z$ if and only if $(m,n) \mid x$. If $(a_0,b_0)$ is a particular solution, then the set of all solutions is
$$\left\{ \left. \left( a_0 + \frac{\ell m}{(m,n)}, b_0 + \frac{\ell n}{(m,n)} \right)  \right| \ell \in \Z \right\}.$$ \end{lemma}

Using this, we see that there are at most $(m,n)$ solutions in the range $1 \leq a \leq m, 1 \leq b \leq n$ to 
$$|an - bm|_{\infty} = p_1^{t_1} \cdots p_r^{t_r} k,$$
and that solutions only exist if we have
$$(m,n) \mid p_1^{t_1} \cdots p_r^{t_r} k.$$

So 
\begin{align*} \sum_{\substack{k = 1 \\ p_i \nmid k \ \hbox{\scriptsize for} \\ i = 1,\ldots,r}}^{\frac{mn \Delta_{\infty}}{p_1^{t_1} \cdots p_r^{t_r}}} \# \left\{ (a,b) \ \left| \ \begin{array}{c} 1 \leq a \leq m, \ 1 \leq b \leq n, \\ |an - bm|_{\infty} = p_1^{t_1} \cdots p_r^{t_r} k \end{array}  \right. \right\} &\leq \sum_{\substack{k = 1 \\ p_i \nmid k \ \hbox{\scriptsize for} \\ i = 1,\ldots,r \\ (m,n) \mid p_1^{t_1} \cdots p_r^{t_r} k}}^{\frac{mn \Delta_{\infty}}{p_1^{t_1} \cdots p_r^{t_r}}} (m,n) \\ &\leq (m,n) \sum_{\substack{k = 1 \\ (m,n) \mid p_1^{t_1} \cdots p_r^{t_r} k}}^{\frac{mn \Delta_{\infty}}{p_1^{t_1} \cdots p_r^{t_r}}} 1 \\ &\leq (m,n) \frac{\frac{mn \Delta_{\infty}}{p_1^{t_1} \cdots p_r^{t_r}}}{(m,n)} \\ &= \frac{mn \Delta_{\infty}}{p_1^{t_1} \cdots p_r^{t_r}}, \end{align*}
and therefore
\begin{align*} N(m,n) &\leq mn \Delta_{\infty} \sum_{t_1 \geq \tau_1} \cdots \sum_{t_r \geq \tau_r} \frac{1}{p_1^{t_1} \cdots p_r^{t_r}} \\ &\leq mn \Delta_{\infty} \prod_{i=1}^r \left( \sum_{t_i \geq \tau_i} \frac{1}{p_i^{t_i}} \right) \leq 2^r mn \Delta_{\infty} \prod_{i=1}^r p_i^{-\tau_i} \leq  2^r mn \Delta_{\infty} \prod_{i=1}^r \Delta_{p_i}. \end{align*}

So we get
\begin{align*} \lambda_P(A_m \cap A_n) &\leq \left( \prod_{p \in P} \delta_p \right) \left( 2^r mn \prod_{p \in P} \Delta_{p} \right) \\ &= 2^r mn \prod_{p \in P} \Delta_p \delta_p \\ &= 2^r mn \cdot 4 \prod_{p \in P} \psi_p(m) \psi_p(n) \\ &= 2^{r+2} mn \Psi(m) \Psi(n)\end{align*}
as required. \end{proof}
%==================================================================
%==================================================================
\section{Proof of Theorem \ref{thm:diagonalduffinschaeffer}}
In \S\ref{sec:duffinschaefferstatement}, we defined sets $A'_{P,n}(\PSI)$ such that
$$A'_P(\PSI) = \limsup_{n \in \N_P} A'_{P,n}(\PSI),$$
and showed that these sets could be written as
$$A'_{n,P}(\PSI) = \bigcup_{\substack{a=1 \\ (a,n) = 1}}^n \prod_{p \in P} B_p \left( \frac{a}{n} , \psi_p(n) \right).$$

Now we need to show that
$$\limsup_{n \in \N_P} A'_{P,n}(\PSI)$$
has measure $1$. Since Theorem \ref{thm:diagonalzeroonelaw} states that $A'_P(\PSI)$ has either measure $0$ or measure $1$, we only need to show that this set has \emph{positive} measure.

We use a lemma (Lemma 2.3 from \cite{Harman}, which we quote below) to get a lower bound on the size of our limsup set.

\begin{lemma}\label{lem:harmanlemma} Let $X$ be a measure space with measure $\lambda$ such that $\lambda(X)$ is finite. Let $\mathcal{E}_n$ be a sequence of measurable subsets of $X$ such that
$$\sum_{n=1}^{\infty} \lambda(\mathcal{E}_n) = \infty.$$

Then the set $E$ of points belonging to infinitely many sets $\mathcal{E}_n$ satisfies
$$\lambda(E) \geq \limsup_{N \to \infty} \left( \sum_{n=1}^N \lambda(\mathcal{E}_n) \right)^2 \left( \sum_{m,n=1}^N \lambda(\mathcal{E}_m \cap \mathcal{E}_n) \right)^{-1}.$$ \end{lemma}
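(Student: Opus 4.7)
The plan is to prove this as a version of the Chung--Erd\H{o}s / Kochen--Stone inequality, derived from a Cauchy--Schwarz argument applied to the indicator sums, then combined with continuity of measure from above (which is where the hypothesis $\lambda(X) < \infty$ is used).

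First, for each $N$ I would set $S_N = \sum_{n=1}^N \mathbf{1}_{\mathcal{E}_n}$ and observe the two identities $\int_X S_N \, \diff{\lambda} = \sum_{n=1}^N \lambda(\mathcal{E}_n)$ and $\int_X S_N^2 \, \diff{\lambda} = \sum_{m,n=1}^N \lambda(\mathcal{E}_m \cap \mathcal{E}_n)$. Applying Cauchy--Schwarz to $S_N = S_N \cdot \mathbf{1}_{\{S_N > 0\}}$ gives
$$\left( \sum_{n=1}^N \lambda(\mathcal{E}_n) \right)^2 \leq \lambda\!\left( \bigcup_{n=1}^N \mathcal{E}_n \right) \cdot \sum_{m,n=1}^N \lambda(\mathcal{E}_m \cap \mathcal{E}_n),$$
since $\{S_N > 0\} = \bigcup_{n=1}^N \mathcal{E}_n$. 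This is the basic union estimate.

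The issue is that $\bigcup_{n=1}^N \mathcal{E}_n$ is not $E = \limsup_n \mathcal{E}_n = \bigcap_{L} \bigcup_{n \geq L} \mathcal{E}_n$, so I need a truncation. For each fixed $L \geq 1$ I would repeat the Cauchy--Schwarz computation with the sum running from $n = L$ to $N$, obtaining
$$\lambda\!\left( \bigcup_{n=L}^N \mathcal{E}_n \right) \geq \frac{\left( \sum_{n=L}^N \lambda(\mathcal{E}_n) \right)^2}{\sum_{m,n=L}^N \lambda(\mathcal{E}_m \cap \mathcal{E}_n)} \geq \frac{\left( \sum_{n=1}^N \lambda(\mathcal{E}_n) - \sum_{n=1}^{L-1} \lambda(\mathcal{E}_n) \right)^2}{\sum_{m,n=1}^N \lambda(\mathcal{E}_m \cap \mathcal{E}_n)},$$
using that enlarging the double sum in the denominator can only decrease the ratio. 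Taking $\limsup$ in $N$, the subtracted quantity $\sum_{n=1}^{L-1} \lambda(\mathcal{E}_n)$ is a fixed constant while $\sum_{n=1}^N \lambda(\mathcal{E}_n) \to \infty$ by hypothesis, so it washes out and the right-hand side tends to $\limsup_N A_N^2 / B_N$ (with $A_N, B_N$ denoting the two sums in the statement). Letting $N \to \infty$ on the left through the monotone increasing union gives
$$\lambda\!\left( \bigcup_{n=L}^{\infty} \mathcal{E}_n \right) \geq \limsup_{N \to \infty} \frac{A_N^2}{B_N}$$
for every $L$.

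Finally, the sets $\bigcup_{n \geq L} \mathcal{E}_n$ decrease in $L$ to $E$, and here I use the hypothesis $\lambda(X) < \infty$, which guarantees $\lambda(\bigcup_{n \geq 1} \mathcal{E}_n) < \infty$ and hence continuity of measure from above: $\lambda(E) = \lim_{L \to \infty} \lambda(\bigcup_{n \geq L} \mathcal{E}_n)$. Combining this with the uniform-in-$L$ lower bound yields the claim. The main obstacle is precisely this passage from the union bound to the $\limsup$ bound: the Cauchy--Schwarz step is standard, but one must notice that truncating the front of the sequence costs nothing in the limit thanks to the divergence assumption, and that the finite-measure hypothesis is exactly what lets continuity from above close the argument.
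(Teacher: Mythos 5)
The paper does not give a proof of this lemma: it is quoted verbatim as Lemma 2.3 of Harman's \emph{Metric Number Theory} \cite{Harman} and used as a black box, so there is no paper argument to compare against. Your proof is correct, and it is the standard second-moment derivation (the divergence case of the Chung--Erd\H{o}s / Kochen--Stone inequality): Cauchy--Schwarz applied to $S_N = \sum_{n=L}^N \mathbf{1}_{\mathcal{E}_n}$ gives $\lambda\bigl(\bigcup_{n=L}^N \mathcal{E}_n\bigr) \geq \bigl(\sum_{n=L}^N \lambda(\mathcal{E}_n)\bigr)^2 / \sum_{m,n=L}^N \lambda(\mathcal{E}_m \cap \mathcal{E}_n)$, enlarging the denominator to the full double sum is legitimate since its terms are nonnegative, and the divergence of $A_N := \sum_{n\leq N}\lambda(\mathcal{E}_n)$ lets the truncation at $L$ disappear in the limit. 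The one place where you gloss slightly is the ``washes out'' step: the clean justification is that $(A_N - A_{L-1})^2/B_N = (A_N^2/B_N)\,(1 - A_{L-1}/A_N)^2$, that $(1 - A_{L-1}/A_N)^2 \to 1$, and that $A_N^2/B_N \leq \lambda\bigl(\bigcup_{n\leq N}\mathcal{E}_n\bigr) \leq \lambda(X) < \infty$ is bounded, so multiplying a bounded sequence by one tending to $1$ preserves the $\limsup$. That is a second use of the finite-measure hypothesis, in addition to the continuity-from-above step you correctly flag at the end. With that detail supplied the argument is complete and matches the proof one finds in Harman.
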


We note that $\Z_P$ is such a measure space, and we consider our measurable subsets $A'_{P,n}(\PSI)$. We have that
$$\frac{2 \varphi(n) \Psi(n)}{p_1 \cdots p_r} < \lambda(A'_{P,n}(\PSI)) \leq 2 \varphi(n) \Psi(n),$$
and hence if
$$\sum_{n \in \N_P} \varphi(n) \Psi(n) = \infty,$$
then we have that
$$\sum_{n \in \N_P} \lambda(A'_{P,n}(\PSI)) = \infty$$
as well. Then by Lemma \ref{lem:harmanlemma} we have that
$$\lambda(\limsup A'_{P,n}(\PSI)) \geq \limsup_{N \to \infty} \left( \sum_{\substack{n \in \N_P \\ n \leq N}} \lambda(A'_{P,n}(\PSI)) \right)^2 \left( \sum_{\substack{m,n \in \N_P \\ m,n \leq N}} \lambda(A'_{P,m}(\PSI) \cap A'_{P,n}(\PSI)) \right)^{-1}.$$

So now we need to show that this limsup is positive. We can do this by appealing to the overlap estimates from Lemma \ref{thm:diagonaloverlapestimates}. Using this, we get
\begin{align*} \frac{\left(\sum_{\substack{n \in \N_P \\ n \leq N}} \lambda(A'_{P,n}(\PSI)) \right)^2}{\sum_{\substack{m,n \in \N_P \\ m,n \leq N}} \lambda(A'_{P,m}(\PSI) \cap A'_{P,n}(\PSI))} &\geq \frac{ \left( \sum_{\substack{n \in \N_P \\ n \leq N}} \frac{2 \varphi(n) \Psi(n)}{p_1 \cdots p_r} \right)^2}{C \sum_{\substack{m,n \in \N_P \\ m,n \leq N}} m n \Psi(m) \Psi(n)} \\ &\geq C \frac{\left( \sum_{\substack{n \in \N_P \\ n \leq N}} \varphi(n) \Psi(n) \right)^2}{\left( \sum_{\substack{n \in \N_P \\ n \leq N}} n \Psi(n) \right)^2} \\ &= C \left( \frac{\sum_{\substack{n \in \N_P \\ n \leq N}} \varphi(n) \Psi(n)}{\sum_{\substack{n \in \N_P \\ n \leq N}} n \Psi(n)} \right)^2,\end{align*}
and hence since we assumed \eqref{eq:diagonallimsupcondition}, we have
$$\limsup_{N \to \infty} \left(\sum_{\substack{n \in \N_P \\ n \leq N}} \lambda_P(A'_{P,n}(\PSI)) \right)^2 \left(\sum_{\substack{m,n \in \N_P \\ m,n \leq N}} \lambda_P(A'_{P,m}(\PSI) \cap A'_{P,n}(\PSI)) \right)^{-1} > 0,$$
and hence $\lambda(A'_P(\PSI)) = 1$ as required.

%==================================================================
%==================================================================

%\section{Further directions}


\begin{thebibliography}{99}

\bibitem{Burger}
Edward B. Burger. Homogeneous Diophantine approximation in $S$-integers. \textit{Pacific Journal of Mathematics}, 152(2):211--253, 1992.

\bibitem{CasselsBook}
Cassels, J.W.S. \textit{An introduction to {D}iophantine approximation.} Cambridge University Press, 1957.

\bibitem{Dirichlet1842}
Dirichlet, L.G.P. Verallgemeinerung eines Satzes aus der Lehre von den Kettenbr\"{u}chen nebst einigen Anwendungen auf die Theorie der Zahlen. \textit{S.-B. Preuss. Akad. Wiss.}, 93--95, 1842.

\bibitem{DuffinSchaeffer}
R.J. Duffin and A.C. Schaeffer. Khintchine's problem in metric Diophantine approximation. \textit{Duke Mathematical Journal}, 8:243--255, 1941.

\bibitem{Gallagher61}
P.X. Gallagher. Approximation by reduced fractions. \textit{J. Math. Soc. Jpn}, 13:342--345, 1961.

\bibitem{Harman}
Glyn Harman. \textit{Metric number theory}. Oxford University Press, 1998.

\bibitem{Haynes2010}
Alan K. Haynes. The metric theory of $p$-adic approximation. \textit{International Mathematics Research Notices}, 1:18--52, 2010.

\bibitem{Lutz}
\'{E}lisabeth Lutz. \textit{Sur les approximations diophantiennes lin\'{e}aires $p$-adiques}. Herman, 1955.

\bibitem{Mahler34}
Kurt Mahler. Zur {A}pproximation $P$-adischer {I}rrationalzahlen. \textit{Nieuw Arch. Wisk.}, 18:22--34, 1934.

\bibitem{PollingtonVaughan}
A.D. Pollington and R.C. Vaughan. The $k$-dimensional Duffin and Schaeffer conjecture. \textit{Mathematika}, 37:190--200, 1990.

\bibitem{PopkenTurkstra}

J. Popken and H. Turkstra. A $P$-adic Analogue of a Theorem of Lebesgue in the Theory of Measure. \textit{Indag. Math.}, 8:505--517, 1946.

\end{thebibliography}
\end{document}